\documentclass[12pt]{amsart}
\usepackage {enumerate}
\usepackage{setspace}
\usepackage{caption}
\usepackage{mathrsfs}
\usepackage{hyperref}
\usepackage{esint}
 \usepackage{cleveref}
\usepackage{amssymb}
\usepackage{graphicx}
\usepackage{epstopdf}
\usepackage{amsthm}
\usepackage{appendix}
\usepackage{color}
\usepackage{lipsum}
\usepackage{seqsplit}
\renewcommand{\seqinsert}{\ifmmode\allowbreak\else\-\fi}

\newtheorem{theorem}{Theorem}[section]
\newtheorem{lemma}[theorem]{Lemma}

\newtheorem{proposition}[theorem]{Proposition}

\numberwithin{equation}{section}

\theoremstyle {definition}

\makeatletter
\@namedef{subjclassname@2020}{%
  \textup{2020} Mathematics Subject Classification}
\makeatother

\newcommand{\R}{\mathbb{R}}
\newcommand{\N}{\mathbb{N}}
\newcommand{\C}{\mathbb{C}}
\newcommand{\osc}{\operatorname{osc}}
\newcommand{\tr}{\operatorname{tr}}

\newcommand{\dd}{\,\mathrm{d}}

\begin{document}
\title[]{Failure of Interior Hölder and Gradient Estimates for the Subcritical Special Lagrangian Equation}
 
\begin{abstract}
    For every $n\ge3$, we show, by a family of explicit
	entire real-analytic solutions, that, for every subcritical phase and every $0<\alpha\le1$, oscillation-based interior $C^\alpha$ estimates fail for the special Lagrangian equation in $\R^n$. In fact, this family has no common interior modulus of continuity.
\end{abstract}

\author{Zhenyu Fan}
\address{Zhenyu Fan, School of Mathematical Sciences, Peking University, Beijing 100871, China}
\email{fanzhenyu@stu.pku.edu.cn}

\author{Caiyan Li}
\address{Caiyan Li, School of Mathematical Sciences, Xiamen University, Xiamen 361005, China}
\email{caiyanli@xmu.edu.cn}

\author{Zhehui Wang}
\address{Zhehui Wang, School of Sciences, Great Bay University, Dongguan 523000, China}
\email{wangzhehui@gbu.edu.cn}

\maketitle
\section{Introduction}\label{Intro}

The special Lagrangian equation arises from the calibrated geometry of Harvey--Lawson \cite{HL}.  For a $C^2$ potential $u$, the gradient (Lagrangian) graph $\Gamma_u=\{(x,Du(x)) \}\subset\C^n$ is called special Lagrangian with constant phase $\Theta$ precisely when
\begin{equation}\label{eq:SL}
F(D^2u):=\sum_{j=1}^{n}\arctan\lambda_j(D^2u)=\Theta,
\end{equation}
where $\lambda_j(D^2u)$ are the eigenvalues of the Hessian and each arctangent takes values in $(-\pi/2,\pi/2)$. Moreover, $\Gamma_u$ is special if and only if it is a volume minimizing submanifold in $(\R^n \times \R^n, \dd x^2+ \dd y^2)$.  Following Yuan \cite{Yuan06}, the phase is called \textit{critical} when $|\Theta|=(n-2)\pi/2$, \textit{supercritical} when $|\Theta|>(n-2)\pi/2$, and \textit{subcritical} when $|\Theta|<(n-2)\pi/2$.  The level set of \eqref{eq:SL} is convex if and only if the phase lies in the critical and supercritical ranges.

For $0<\alpha\le1$, define
$$
[u]_{C^\alpha(B_r)}
:=\begin{cases}
\displaystyle\sup_{\substack{x,y\in B_r\\x\ne y}}\frac{|u(x)-u(y)|}{|x-y|^\alpha},&0<\alpha<1,\\[6pt]
\|Du\|_{L^\infty(B_r)},&\alpha=1.
\end{cases}
$$

Our main result is as follows.

\begin{theorem}\label{thm:main}
 For every $n\ge3$ and every fixed phase $|\Theta|<(n-2)\pi/2$, there exists a sequence of entire real-analytic solutions $u_k$ to \eqref{eq:SL} in $\R^n$ such that, for every $\alpha\in(0,1]$ and every fixed $r\in(0,1)$,
\[
\sup_{k\ge1}\osc_{B_1}u_k<\infty,
 \qquad\text{but}\qquad
[u_k]_{C^\alpha(B_r)}\longrightarrow\infty \quad\text{as }k\to\infty.
\]
Moreover, for every fixed $r\in(0,1)$, the family $\{u_k\}$ has no common modulus of continuity on $B_r$.
\end{theorem}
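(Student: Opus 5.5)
We will build the family in $\R^3$ first and then lift it to $\R^n$. Suppose $v$ solves \eqref{eq:SL} in $\R^3$ with phase $\Theta'$. Then
\[
u(x)=v(x_1,x_2,x_3)+\tfrac12\sum_{j=4}^n c_jx_j^2
\]
solves \eqref{eq:SL} in $\R^n$ with phase $\Theta'+\sum_{j\ge4}\arctan c_j$. The quadratic tail changes neither $\osc_{B_1}$ nor the $x'$-gradient in an essential way, since $[u]_{C^\alpha(B_r)}\ge[v(\cdot,0)]_{C^\alpha(B_r\cap\R^3)}$. The attainable shifts $\sum_{j\ge 4}\arctan c_j$ fill $\bigl(-(n-3)\pi/2,(n-3)\pi/2\bigr)$. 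Hence every subcritical $\Theta$ can be written as $\Theta'+\sum_{j\ge4}\arctan c_j$ with $|\Theta'|<\pi/2$, which is the subcritical range for $n=3$. So it suffices to treat $n=3$ and $|\Theta|<\pi/2$.

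A product construction will not work. If $v=\varphi(x_1,x_2)+\psi(x_3)$, then the equation forces $\arctan\psi''$ to be constant. Moreover, in $\R^2$ the relevant equation is either harmonic ($\Theta=0$, critical) or supercritical, or Monge--Ampère $\det D^2\varphi=1$. All three have interior estimates, and the last has quadratic entire solutions by Jörgens. The three variables must therefore genuinely interact, and the saddle/non-convex level set available only in the subcritical range must be used.

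My plan is to produce a single explicit entire real-analytic solution $v$ of phase $\Theta$ in $\R^3$ and then extract the sequence by translation, subtraction of affine functions, and rescaling. All three operations preserve \eqref{eq:SL}. I would obtain $v$ by a Lewy--Yuan $U(n)$-rotation $e^{i\beta}$ of the gradient graph of a simple explicit solution. The starting point would be a solution of cubic type in the spirit of $x_1x_2x_3$-like saddles, or a harmonic solution embedded in a Lagrangian product. The rotation shifts the phase by $3\beta$, so the new phase can be tuned to $\Theta$. It also turns regions where the original Hessian has eigenvalues near $\cot\beta$ into nearly vertical sheets of the new Lagrangian graph, while keeping it an entire gradient graph, provided the original eigenvalues avoid $\cot\beta$ globally.

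The quantitative goal is a sequence of centers $a_k$ and affine functions $\ell_k$ such that the following two bounds hold:
\[
\osc_{B_1(a_k)}(v-\ell_k)\le C,\qquad \sup_{B_r(a_k)}|D(v-\ell_k)|\ge k.
\]
Here the steep sheets become thinner along $a_k$, so the potential changes only boundedly across them while its slope blows up. Setting $u_k(x)=v(x+a_k)-\ell_k(x+a_k)$ then gives both assertions of Theorem \ref{thm:main}. The case $\alpha=1$ is immediate. For $\alpha<1$, a jump of size at least $c>0$ across a layer of width $\delta_k\to0$ inside $B_r$ gives $[u_k]_{C^\alpha}\ge c\delta_k^{-\alpha}\to\infty$. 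The same uniform jump across vanishing distances also rules out any common modulus of continuity.

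The main obstacle is constructing $v$, and it has two parts. First, one must check that the rotated Lagrangian stays a graph over all of $\R^3$, so that $v$ is entire and analytic. Second, one must compute the thin-layer geometry explicitly enough to verify that the oscillation stays bounded while the width $\delta_k$ shrinks. I would attempt this with a potential whose rotated Hessian has one eigenvalue tending to $+\infty$ along a family of parallel slabs. I would compute $v$ through the Legendre-type parametrization $x=\cos\beta\, y+\sin\beta\, Dw(y)$ and reduce both checks to one-variable estimates along the normal direction of the slabs.
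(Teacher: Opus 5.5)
The outer layers of your proposal are correct and match the paper. The lift from $\R^3$ to $\R^n$ by a quadratic tail with $\Theta'=\Theta/(n-2)$ is what the paper does. So is deducing $[u_k]_{C^\alpha(B_r)}\to\infty$, and the failure of a common modulus, from a fixed-size change of $u_k$ over distances $\delta_k\to0$ inside $B_r$.

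The gap is that you never produce the three-dimensional solution, and that is the entire content of the theorem. You only describe how you would search for it, and the routes you suggest fail as stated:
\begin{itemize}
\item The model $x_1x_2x_3$ is not a solution for any constant phase. Its Hessian gives $\sigma_1-\sigma_3=-2x_1x_2x_3$, which is odd, while $(1-\sigma_2)\tan\Theta=(1+|x|^2)\tan\Theta$ is even.
\item A scalar rotation $e^{i\beta}$ acts on each complex coordinate separately. It therefore maps a product Lagrangian to a product, whose potential again splits as $\varphi(x_1,x_2)+\psi(x_3)$. That is exactly the case you yourself ruled out.
\item Your graphicality criterion is not enough. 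If the eigenvalues of $D^2w$ merely avoid the critical value, the map $x\mapsto\cos\beta\,x+\sin\beta\,Dw(x)$ is only a local diffeomorphism. The usual criterion for an entire rotated graph is a uniform gap $\cos\beta+\lambda_j\sin\beta\ge\varepsilon>0$. Under that gap the new eigenvalues $\tan(\arctan\lambda_j-\beta)$ are bounded. Interpolating between a Hessian bound and $\osc_{B_1}\le C$ then gives a gradient bound on $B_r$ for every translate, affine modification and rescaling $\lambda^{-2}v(\lambda\,\cdot)$ of $v$.
\end{itemize}
So the near-vertical sheets you want live exactly where your graphicality argument breaks down. You would need a genuinely non-uniform global argument, and none is supplied.

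The missing idea is a potential that stays bounded on $B_1$ while its eigenframe rotates fast. The paper takes
\[
U_{k,\theta}=\tfrac12q^T\bigl((\tan\theta)I_2+(\sec\theta)R_{kx_3}^TJR_{kx_3}\bigr)q,\qquad q=(x_1,x_2)^T,
\]
which is a Joyce helicoid-type example.
\begin{itemize}
\item The $q$-block has $x_3$-independent eigenvalues $\tan\theta\pm\sec\theta$. Hence $|U_{k,\theta}|\le\tfrac12(\sec\theta+|\tan\theta|)|q|^2$ uniformly in $k$, while $\partial_{x_3}U_{k,\theta}=\tfrac12q^TP_{k,\theta}'q$ has size $k|q|^2$.
\item Using $Q_k^2=I_2$, $Q_k''=-4k^2Q_k$ and $Q_kQ_k'+Q_k'Q_k=0$, the phase equation reduces to the identity $\sigma_1-\sigma_3=(1-\sigma_2)\tan\theta$. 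The branch is fixed by $1-\sigma_2\ge2$.
\item Translating by $a=(2^{-1/2},2^{-1/2},0)$ gives $|\partial_{x_3}u_k(0)|=k\sec\theta$ and $u_k(te_3)=\tfrac12\tan\theta-\tfrac12\sec\theta\sin(2kt)$. This is a bounded-amplitude oscillation at frequency $k$, not a thin steep layer.
\end{itemize}
This family does fit your one-solution template, since $\widetilde U_{k,\theta}(X)=k^{-2}U_{1,\theta}\bigl(k(X+a)\bigr)$. Also, the phase $\theta$ is reached from $\theta=0$ by the non-scalar rotation $\mathrm{diag}(e^{i\theta/2},e^{i\theta/2},1)$ of the gradient graph, which keeps graphicality trivial. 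But that solution is precisely what your proposal does not supply.
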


	 Warren--Yuan \cite{WY09} obtained explicit gradient and Hessian 
	estimates for every constant phase in dimension two.  Their later work
	\cite[Theorem~1.3]{WY10} proved in every dimension that a smooth solution
	of \eqref{eq:SL} on $B_{3R}$ with constant critical or supercritical phase
	satisfies
	\[
	\max_{B_R}|Du|
	\le C(n)\left(1+\frac{\osc_{B_{3R}}u}{R}\right).
	\]
	Later, Yuan removed the additive dimensional constant in his lecture notes \cite{Yua15}, obtaining the scale-invariant refinement
	\[
	\max_{B_R}|Du|
	\le C(n)\frac{\osc_{B_{2R}}u}{R}.
	\]
	The Warren--Yuan paper also established Hessian estimates in dimension three, and
	Wang--Yuan \cite{WY14} obtained the corresponding Hessian estimates in
	arbitrary dimension.  Together with approximation and the Evans--Krylov
	theory, these estimates yield local analyticity of continuous viscosity
	solutions in the constant critical and supercritical regimes.
	Bhattacharya--Mooney--Shankar \cite{BMS} subsequently extended the
	interior gradient estimate to the Lagrangian mean curvature equation with
	$C^2$ variable critical or supercritical phase; the bound depends on the
	oscillation of $u$ and the $C^2$ norm of the phase. Since an interior gradient estimate immediately implies interior $C^\alpha$ estimates for every $0<\alpha<1$, the results above control the full range $0<\alpha\le1$ in terms of the oscillation of $u$ in the critical and supercritical regimes.

    
	The subcritical picture is different. In dimension three,
	Nadirashvili--Vl\u{a}du\c{t} \cite{NV10} constructed singular
	$C^{1,1/3}$ viscosity solutions at subcritical phases.  For every
	$m\ge2$ and each fixed $\Theta\in(-\pi/2,\pi/2)$, Wang--Yuan
	\cite{WY13} constructed viscosity solutions that belong to
	$C^{1,1/(2m-1)}(B_1)\cap C^\infty(B_1\setminus\{0\})$ but do not
	belong to $C^{1,\delta}(B_1)$ for any $\delta>1/(2m-1)$.  They also produced
	smooth solutions $u^\varepsilon$ satisfying
	\[
	\|Du^\varepsilon\|_{L^\infty(B_1)}\le C,
	\qquad
	|D^2u^\varepsilon(0)|\longrightarrow\infty
	\quad\text{as }\varepsilon\to0.
	\]
		Adding quadratic terms in the remaining variables extends these examples
	to every subcritical phase in higher dimensions. These examples rule out general $C^2$ regularity and Hessian
	estimates.  Since all the above singular solutions are at least $C^1$, there was a conjecture \cite{NV10} that all viscosity solutions to \eqref{eq:SL} are $C^1$.  More recently,
	Mooney--Savin \cite{MS24} disproved this conjecture by constructing a Lipschitz
	viscosity solution at a subcritical phase in dimension three.  Their semi-convex Lipschitz solution is analytic away from an
	embedded surface but not $C^1$; its gradient is discontinuous across the
	surface and its gradient graph is not minimal. Mooney and Shankar \cite{MS25} later generalized Mooney-Savin's singular solutions to any subcritical phase $\Theta\in (-(n-2)\pi/2, (n-2)\pi/2)$ in all dimensions. A natural open problem is whether interior gradient estimates hold for the subcritical phases. If this holds, the examples by Mooney-Savin and Mooney-Shankar would imply that Lipschitz is the optimal regularity we can obtain for viscosity solutions. Mooney \cite{Mooney24} investigated a homogeneous approach to this problem and obtain an obstruction, but that approach does not itself produce a counterexample.  Our theorem above provides a negative answer to this open problem. More strongly, the resulting family has no common interior modulus of continuity; in particular, oscillation-based interior $C^\alpha$ estimates fail for every $0<\alpha<1$, even among entire real-analytic solutions.

	Setting $M=\sup_{k>0}\osc_{B_1}u_k$ in 
	\Cref{thm:main} immediately rules out
	any finite estimate of the form
	\[
	|Du(0)|\le C(n,\Theta,M)
	\qquad\text{whenever }\osc_{B_1}u\le M.
	\]
	Together with the constant-phase estimate of \cite{WY10}, this proves that
	$|\Theta|=(n-2)\pi/2$ is the exact phase threshold for
	oscillation-based interior gradient estimates without additional
	convexity or Hessian constraints. 
	
	The key idea of the construction is to rotate the eigendirections of a fixed 
	quadratic form increasingly rapidly, without increasing the size of its
	coefficients.  Precisely, in the three-dimensional zero-phase case, we start with the simplest two variable saddle quadratic $(x^2-y^2)/2$. Its gradient graph is a Lagrangian 2-plane in $\C^2$ with Lagrangian angles $\pi/4$ and $-\pi/4$. By applying a simultaneous rotation to this Lagrangian 2-plane with constant speed in the first two complex coordinates and a translation along the third complex direction, we obtain a helicoid-like special Lagrangian submanifold in $\C^3$, which is a special case of Joyce's helicoidal examples \cite{JoyceQuadrics,JoyceSymmetry}. As we increase the rotation speed, at the potential level, the rotated quadratic remains uniformly bounded on a fixed ball. Geometrically, however, the height of the gradient graph in the third complex direction will grow arbitrarily large. Finally, by a fixed translation in the first two variables, this produces a family with unbounded gradients at the origin. Along the $x_3$-axis, the potentials also change by a fixed amount over distances of order $k^{-1}$, forcing the H\"older seminorms to diverge and ruling out a common modulus of continuity. For a general subcritical phase $\Theta\in (-\pi/2, \pi/2)$, it suffices to start the rotation from a Lagrangian 2-plane with Lagrangian angles $\Theta/2 + \pi/4$ and $\Theta/2 - \pi/4$. In higher dimensions, adding quadratic terms of the remaining variables will give us desired examples.

\section{Proof of Theorem \ref{thm:main}}
In this section, we first introduce a family of entire real-analytic solutions in dimension three, whose oscillations remain uniformly bounded while their gradients become unbounded. Then, starting from this three-dimensional examples, we prove Theorem \ref{thm:main}.

    For $t\in\R$, set
	$$
	R_t=
	\begin{pmatrix}
		\cos t&-\sin t\\
		\sin t&\cos t
	\end{pmatrix},
	\qquad
	J=\begin{pmatrix}1&0\\0&-1\end{pmatrix}.
	$$
	For $k\in\N_+$, define
	\begin{equation}\label{eq:Qdef}
		Q_k(z):=R_{k z}^{T}JR_{k z}
		=
		\begin{pmatrix}
			\cos(2k z)&-\sin(2k z)\\
			-\sin(2k z)&-\cos(2k z)
		\end{pmatrix}.
	\end{equation}
    The following identities are immediate from this representation.
	
	\begin{lemma}\label{lem:Qidentities}
		Let $I_n$ be the $n\times n$ identity matrix. For every $z\in\R$, 
		\begin{align}
			&Q_k^2=I_2,\qquad \tr Q_k=0,\label{eq:Qbasic}\\
			&Q_k''=-4k^2Q_k, \qquad (Q_k')^2=4k^2I_2,\label{eq:Qderiv}\\
			&Q_k Q_k'+Q_k'Q_k=0.\label{eq:Qanti}
		\end{align}
	\end{lemma}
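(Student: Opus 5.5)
Since $Q_k(z)$ is given explicitly in \eqref{eq:Qdef}, every identity can be checked by direct matrix computation. It is cleaner, though, to use the conjugation structure $Q_k=R_{kz}^TJR_{kz}$, because then nothing beyond elementary facts about rotations is needed. Throughout, $z$ is a real variable and a prime denotes $d/dz$.

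For \eqref{eq:Qbasic}: $R_{kz}$ is orthogonal and $J^2=I_2$, so
\[
Q_k^2=R_{kz}^TJR_{kz}R_{kz}^TJR_{kz}=R_{kz}^TJ^2R_{kz}=I_2 .
\]
Similarity invariance of the trace gives $\tr Q_k=\tr J=0$. Both facts can also be read off the matrix in \eqref{eq:Qdef}: the diagonal entries are $\pm\cos(2kz)$, and squaring the matrix gives diagonal entries $\cos^2+\sin^2=1$ and off-diagonal entries $-\cos\sin+\sin\cos=0$.

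For \eqref{eq:Qderiv}: differentiate the entries of \eqref{eq:Qdef}. Every entry is $\pm\cos(2kz)$ or $\pm\sin(2kz)$, and each such function satisfies $f''=-4k^2f$. Hence $Q_k''=-4k^2Q_k$. For the first derivative,
\[
Q_k'=2k\begin{pmatrix}-\sin(2kz)&-\cos(2kz)\\-\cos(2kz)&\sin(2kz)\end{pmatrix}.
\]
This is $2k$ times a symmetric, trace-free matrix whose entries $a=-\sin(2kz)$ and $b=-\cos(2kz)$ satisfy $a^2+b^2=1$. Any matrix $\begin{pmatrix}a&b\\b&-a\end{pmatrix}$ squares to $(a^2+b^2)I_2$, so $(Q_k')^2=4k^2I_2$.

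For \eqref{eq:Qanti}, the quickest route is to differentiate the identity $Q_k^2=I_2$ from \eqref{eq:Qbasic}. The product rule gives $Q_k'Q_k+Q_kQ_k'=0$ immediately.

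No step here is a real obstacle. The only point needing care is the sign bookkeeping in $Q_k'$, and the route through differentiating $Q_k^2=I_2$ avoids it for \eqref{eq:Qanti}.
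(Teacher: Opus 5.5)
Your proof is correct and matches the paper's approach: the paper simply states that these identities are immediate from the representation $Q_k=R_{kz}^TJR_{kz}$ and its explicit matrix form. Your computations, including the formula for $Q_k'$ and getting the anticommutation identity by differentiating $Q_k^2=I_2$, are the routine verifications the paper leaves implicit.
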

    Fix a phase $-\frac{\pi}{2}<\theta<\frac{\pi}{2}$. Let
	\begin{equation}\label{eq:Pdef}
		P_{k,\theta}(z)
		=(\tan\theta)I_2+(\sec\theta)Q_k(z).
	\end{equation}
	  Its eigenvalues are $\tan\theta+\sec\theta = \tan(\theta/2+\pi/4)$ and $\tan\theta-\sec\theta= \tan(\theta/2-\pi/4)$, both are independent of $z$. For $q=(x,y)^T$, define
	\begin{equation}\label{eq:U3}
		U_{k,\theta}(x,y,z)
		=\frac12q^TP_{k,\theta}(z)q.
	\end{equation}
	Equivalently,
	\begin{equation}\label{eq:U3expanded}
		U_{k,\theta}(x,y,z)
		=\frac{\tan\theta}{2}(x^2+y^2)+\frac{\sec\theta}{2}\left[
		\cos(2k z)(x^2-y^2)
		-2\sin(2k z)xy
		\right].
	\end{equation}
\begin{lemma}\label{prop:3dphase}
		For every $k\in\N_+$ and every $\theta\in(-\pi/2,\pi/2)$,
		\begin{equation}\label{eq:3dphase}
			\sum_{j=1}^{3}\arctan\lambda_j
			\bigl(D^2U_{k,\theta}\bigr)=\theta
			\qquad\text{in }\R^3.
		\end{equation}
	\end{lemma}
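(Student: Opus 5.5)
The plan is to use the standard complex-determinant characterization of the phase: for a symmetric matrix $H$ with eigenvalues $\lambda_j$,
\[
\det(I+iH)=\prod_{j}(1+i\lambda_j)=\Bigl(\prod_j\sqrt{1+\lambda_j^2}\Bigr)\,e^{\,i\sum_j\arctan\lambda_j}.
\]
So it suffices to show two things. First, $\det(I+iD^2U_{k,\theta})$ is a positive multiple of $e^{i\theta}$ at every point, which gives $\sum_j\arctan\lambda_j\in\theta+2\pi\mathbb Z$. Second, a continuity argument then pins down the branch.

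\textbf{Step 1: Hessian and Schur complement.} Write $q=(x,y)^T$. By \eqref{eq:U3} the Hessian has the block form
\[
D^2U_{k,\theta}=\begin{pmatrix} P & P'q\\ (P'q)^T & \tfrac12 q^TP''q\end{pmatrix},
\]
with $P=P_{k,\theta}(z)$, $P'=(\sec\theta)Q_k'$ and $P''=-4k^2(\sec\theta)Q_k$, using \eqref{eq:Qderiv}. Set $A=I_2+iP$. The Schur complement formula then gives
\[
\det(I_3+iD^2U)=\det A\cdot\Bigl(1+\tfrac i2q^TP''q+q^TP'A^{-1}P'q\Bigr).
\]
The eigenvalues of $P$ are $\tan(\theta/2\pm\pi/4)$, so $\det A$ has argument exactly $\theta$.

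\textbf{Step 2: the bracket is a positive real number.} Put $a=1+i\tan\theta$. Since $Q_k^2=I_2$ by \eqref{eq:Qbasic}, we get
\[
A^{-1}=\frac{aI_2-i(\sec\theta)Q_k}{a^2+\sec^2\theta}=\frac{aI_2-i(\sec\theta)Q_k}{2a}.
\]
Next use $(Q_k')^2=4k^2I_2$ and the anticommutation \eqref{eq:Qanti}, which give $Q_k'Q_kQ_k'=-4k^2Q_k$. Together these yield
\[
P'A^{-1}P'=2k^2\sec^2\theta\Bigl(I_2+\frac{i\sec\theta}{a}Q_k\Bigr).
\]
Combining with $\tfrac i2q^TP''q=-2ik^2\sec\theta\,q^TQ_kq$ and the identity $\sec^2\theta/a=1-i\tan\theta$, the imaginary parts cancel. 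The bracket becomes
\[
1+2k^2\sec^2\theta\,|q|^2+2k^2\sec\theta\tan\theta\,q^TQ_kq.
\]
The eigenvalues of $Q_k$ are $\pm1$, so $|q^TQ_kq|\le|q|^2$. Since $|\tan\theta|<\sec\theta$, this quantity is at least $1$. This algebraic cancellation is the crux of the proof, and I expect it to be the main obstacle. Concretely, the care lies in tracking the factors of $i$ and in the identity $a^2+\sec^2\theta=2a$.

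\textbf{Step 3: fixing the branch.} The function $\sum_j\arctan\lambda_j(D^2U_{k,\theta})$ is continuous on the connected set $\R^3$, because symmetric functions of eigenvalues depend continuously on the matrix. By Step 2 it takes values in the discrete set $\theta+2\pi\mathbb Z$, so it is constant. On the axis $q=0$ the Hessian is $\operatorname{diag}(P,0)$. Its phase there is
\[
(\theta/2+\pi/4)+(\theta/2-\pi/4)+0=\theta,
\]
which proves \eqref{eq:3dphase}.
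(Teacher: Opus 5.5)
Your proof is correct and is essentially the paper's argument: the paper's quantities $1-\sigma_2$ and $\sigma_1-\sigma_3$ are exactly the real and imaginary parts of $\det(I_3+iH)=2(1+i\tan\theta)\,b$, where $b$ is your Schur-complement bracket (so $1-\sigma_2=2b$). The same identities from Lemma~\ref{lem:Qidentities} drive the cancellation, and the same bound $\sec\theta>|\tan\theta|$ gives positivity.

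The differences are only in execution. You compute with a complex Schur complement instead of expanding $\sigma_1,\sigma_2,\sigma_3$. For the branch, the paper argues pointwise: $\cos S>0$ together with $S\in(-3\pi/2,3\pi/2)$ forces $S\in(-\pi/2,\pi/2)$. So your continuity-and-axis step is valid but not needed.
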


\begin{proof}
    Write $P=P_{k,\theta}(z)$, and set $w=P'q$ and $c=\frac12q^TP''q$. Then the Hessian has the block form
		\begin{equation}\label{eq:Hblock}
			H:=D^2U_{k,\theta}
			=\begin{pmatrix}P&w\\w^T&c\end{pmatrix}.
		\end{equation}
		For a symmetric \(3\times3\) matrix, let \(\sigma_l=\sigma_l(H)\) denote the elementary symmetric functions of its eigenvalues.  A direct computation gives
		\begin{align}
			\sigma_1&=\tr P+c,\label{eq:sigma1}\\
			\sigma_2&=\det P+c\,\tr P-|w|^2,\label{eq:sigma2}\\
			\sigma_3&=c\det P-(\det P)w^TP^{-1}w.\label{eq:sigma3}
		\end{align}
        By \eqref{eq:Qbasic}, we know $P^{-1}=-(\tan\theta)I_2+(\sec\theta)Q_k.$ Hence, \eqref{eq:sigma1}--\eqref{eq:sigma3} yields
		\begin{equation}\label{eq:keyreduce}
			(\sigma_1-\sigma_3)-(1-\sigma_2)\tan\theta
			=2c(\sec\theta)^2-\sec\theta\,w^TQ_k w.
		\end{equation}
		By Lemma~\ref{lem:Qidentities}, we have
		$$
		w=(\sec\theta)Q_k'q, \qquad
		c=\frac{\sec\theta}{2}q^TQ_k''q
		=-2(\sec\theta) k^2q^TQ_k q,
		$$
		and 
		$$
		w^TQ_k w
		=-4(\sec\theta)^2k^2q^TQ_k q.
		$$
		The right-hand side of \eqref{eq:keyreduce} therefore vanishes, so
		\begin{equation}\label{eq:polyphase}
			\sigma_1-\sigma_3=(1-\sigma_2)\tan\theta.
		\end{equation}
		
		It remains to identify the correct branch of the arctangent equation.  From
		\eqref{eq:sigma2},
		\begin{align*}
			1-\sigma_2
			&=2-2c\tan\theta+|w|^2\\
			&=2+4(\sec\theta)k^2
			\left((\sec\theta)|q|^2
			+(\tan\theta)q^TQ_k q\right)\\
            &\ge 2+4(\sec\theta)k^2(\sec\theta-|\tan\theta|)|q|^2,
		\end{align*}
        and hence 
        \begin{equation}\label{eq:branchpositive}
        1-\sigma_2>0.
        \end{equation}
        For $j=1, 2, 3$, set $\alpha_j=\arctan \lambda_j(H)$, and  $$S=\alpha_1+\alpha_2+\alpha_3, \qquad D=\sqrt{1+\lambda_1^2}\sqrt{1+\lambda_2^2}\sqrt{1+\lambda_3^2}.$$
        By a direct calculation, 
        \begin{equation}\label{coss}
            \cos S=\frac{1-\sigma_2}{D},\qquad \sin S=\frac{\sigma_1-\sigma_3}{D}.
        \end{equation}
        Hence, 
        \begin{equation}\label{tans}\tan S=\frac{\sigma_1-\sigma_3}{1-\sigma_2}=\tan\theta,\end{equation} and it yields $S=\theta+m\pi$ for some $m\in\N$. Moreover, by \eqref{eq:branchpositive} and \eqref{coss} we know $\cos S>0$. Combining this with $S\in(-\frac{3\pi}{2}, \frac{3\pi}{2})$, we have $S\in(-\frac{\pi}{2}, \frac{\pi}{2})$. Since $\theta\in(-\frac{\pi}{2}, \frac{\pi}{2})$, by \eqref{tans} we have $S=\theta$. This completes the proof.
\end{proof}

Define
	\begin{equation}\label{eq:Utranslated}
		\widetilde U_{k,\theta}(x,y,z)
		=U_{k,\theta}\left(x+\frac1{\sqrt2},
		y+\frac1{\sqrt2},z\right).
	\end{equation}
	Since translation preserves the Hessian, $\widetilde U_{k,\theta}$ is also an entire solution in $\R^3$.
	
	\begin{proposition}\label{prop:osc-grad}
		For every $k\in\N_+$ and every $\theta\in(-\pi/2,\pi/2)$, we have
		\begin{align}
			\osc_{B_1}\widetilde U_{k,\theta}
			&\le 4(\sec\theta+|\tan\theta|),\label{eq:3dosc}\\
			\left|D\widetilde U_{k,\theta}(0)\right|
			&\ge k\sec\theta.\label{eq:3dgrad}
		\end{align}
	\end{proposition}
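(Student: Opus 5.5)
Both estimates come from the explicit formula \eqref{eq:U3expanded} with $x$ replaced by $x+1/\sqrt2$ and $y$ by $y+1/\sqrt2$. Write $\tilde q=(x+1/\sqrt2,\,y+1/\sqrt2)^T$, so that $\widetilde U_{k,\theta}=\frac12\tilde q^TP_{k,\theta}(z)\tilde q$.

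\textbf{Oscillation bound.} We bound $|\widetilde U_{k,\theta}|$ pointwise on $B_1$ and use $\osc\le 2\sup|\cdot|$. By \eqref{eq:Qbasic}, $Q_k(z)$ is symmetric with $Q_k^2=I_2$, hence has operator norm $1$. Therefore the operator norm of $P_{k,\theta}(z)$ is at most $|\tan\theta|+\sec\theta$, uniformly in $z$ and $k$. On $B_1$ we have $|\tilde q|\le |(x,y)|+1\le 2$, so
\[
|\widetilde U_{k,\theta}|\le \tfrac12\cdot 4\,(\sec\theta+|\tan\theta|)=2(\sec\theta+|\tan\theta|).
\]
Hence $\osc_{B_1}\widetilde U_{k,\theta}\le 4(\sec\theta+|\tan\theta|)$, which is \eqref{eq:3dosc}. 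The key point is that the rapid rotation in $z$ never enlarges the coefficients.

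\textbf{Gradient bound.} It suffices to compute the $z$-derivative at the origin, since $|D\widetilde U_{k,\theta}(0)|\ge|\partial_z\widetilde U_{k,\theta}(0)|$. Differentiating \eqref{eq:U3expanded} in $z$ gives
\[
\partial_z U_{k,\theta}=k\sec\theta\bigl[-\sin(2kz)(x^2-y^2)-2\cos(2kz)\,xy\bigr].
\]
At the origin, $x=y=1/\sqrt2$ and $z=0$. Then $x^2-y^2=0$ and $2xy=1$, so $\partial_z\widetilde U_{k,\theta}(0)=-k\sec\theta$. This gives $|D\widetilde U_{k,\theta}(0)|\ge k\sec\theta$, which is \eqref{eq:3dgrad}.

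There is no real obstacle here. The only choice is the translation: it is made along the diagonal so that $xy\ne0$ at $z=0$, which makes the $\cos(2kz)$ term survive. One can also write $\partial_z\widetilde U_{k,\theta}=\frac12\tilde q^TP_{k,\theta}'\tilde q$ with $P_{k,\theta}'=\sec\theta\, Q_k'$. Since $(Q_k')^2=4k^2I_2$ by \eqref{eq:Qderiv}, this derivative scales like $k$ while $\widetilde U_{k,\theta}$ itself stays bounded.
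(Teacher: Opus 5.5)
Your proposal is correct and takes essentially the same approach as the paper. The oscillation bound comes from $|\widetilde U_{k,\theta}|\le 2(\sec\theta+|\tan\theta|)$ on $B_1$, using the $z$-independent eigenvalues $\tan\theta\pm\sec\theta$ of $P_{k,\theta}$ and $|\tilde q|\le 2$, and the gradient bound comes from computing $\partial_z\widetilde U_{k,\theta}(0)=-k\sec\theta$ directly; you differentiate the expanded formula instead of evaluating $Q_k'(0)$ in matrix form, which is the same computation.
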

	
	\begin{proof}
    Recall the eigenvalues of $P_{k,\theta}(z)$ are $\tan\theta+\sec\theta$ and $\tan\theta-\sec\theta$. For $(x,y,z)\in B_1$, one has
		$$
		\left|\widetilde U_{k,\theta}(x,y,z)\right|
		\le 2(\sec\theta+|\tan\theta|),
		$$
		which proves \eqref{eq:3dosc}.
		
		At \(z=0\),
		$$
		Q_k'(0)
		=-2k
		\begin{pmatrix}0&1\\1&0\end{pmatrix}.
		$$
		and hence 
		\begin{align*}
			\partial_z\widetilde U_{k,\theta}(0)
			&=\frac12q_0^TP_{k,\theta}'(0)q_0\\
			&=\frac{\sec\theta}{2}q_0^TQ_k'(0)q_0
			=-k\sec\theta,
		\end{align*}
        where $q_0=(\frac{1}{\sqrt{2}}, \frac{1}{\sqrt{2}})^T$.
		Thus \eqref{eq:3dgrad} follows.
	\end{proof}

	  Let $n\ge3$ and $|\Theta|<\frac{(n-2)\pi}{2}.$ Set
	\begin{equation}\label{eq:thetahigh}
		\theta=\frac{\Theta}{n-2},
	\end{equation}
	then $|\theta|<\pi/2$.  Define
	\begin{equation}\label{eq:un}
		u_k(x_1,\ldots,x_n)
		=\widetilde U_{k,\theta}(x_1,x_2,x_3)
		+\frac{\tan\theta}{2}\sum_{j=4}^{n}x_j^2,
	\end{equation}
	where the final sum is empty when \(n=3\).
	
	\begin{proof}[Proof of \Cref{thm:main}]
		The Hessian of \eqref{eq:un} is block diagonal:
        \begin{equation}
			D^2u_k=
			\begin{pmatrix}D^2\widetilde U_{k,\theta}&0\\0&(\tan\theta)I_{n-3}\end{pmatrix}.
		\end{equation}
		By Lemma \ref{prop:3dphase}, the three-dimensional block has phase $\theta$.  Each of the remaining $n-3$ eigenvalues equals $\tan\theta$ and contributes
		$$
		\arctan(\tan\theta)=\theta,
		$$
		since $\theta\in(-\pi/2,\pi/2)$.  Consequently,
		$$
		\sum_{j=1}^n\arctan\lambda_j(D^2u_k)
		=\theta+(n-3)\theta=\Theta \qquad \text{in }\R^n.
		$$
        
		At the origin, the added quadratic term has zero gradient.  Hence Proposition~\ref{prop:osc-grad} gives
		$$
		|Du_k(0)|
		\ge |\partial_{x_3}u_k(0)|
		=k\sec\theta,
		$$
		which tends to infinity as $k\to\infty$. Hence, for every fixed $r\in(0,1)$,
$$
[u_k]_{C^1(B_r)}
=\|Du_k\|_{L^\infty(B_r)}
\ge |Du_k(0)|
\ge k\sec\theta
\longrightarrow\infty.
$$

 We next show that the family has no common interior modulus of continuity. Fix $r\in(0,1)$. For $k>\pi/(4r)$, the points $0$ and $\frac{\pi}{4k}e_3$ both lie in $B_r$. By the definition of $\widetilde U_{k,\theta}$,
$$
u_k(te_3)
=
\frac{\tan\theta}{2}
-\frac{\sec\theta}{2}\sin(2kt).
$$
In particular,
$$
\left|
u_k\left(\frac{\pi}{4k}e_3\right)-u_k(0)
\right|
=
\frac{\sec\theta}{2}.
$$
Suppose, to the contrary, that $\{u_k\}$ admits a common modulus of continuity $\rho$ on $B_r$. Thus $\rho(s)\to0$ as $s\downarrow0$ and
$$
|u_k(x)-u_k(y)|
\le
\rho(|x-y|)
$$
for all $k$ and all $x,y\in B_r$. Evaluating this estimate at $x=\frac{\pi}{4k}e_3$ and $y=0$, we obtain
$$
\frac{\sec\theta}{2}
\le
\rho\left(\frac{\pi}{4k}\right)
\longrightarrow0
\qquad\text{as }k\to\infty,
$$
which is impossible. Hence $\{u_k\}$ admits no common modulus of continuity on $B_r$.

The same pair of points gives, for every $0<\alpha<1$,
$$
[u_k]_{C^\alpha(B_r)}
\ge
\frac{
\left|
u_k\left(\frac{\pi}{4k}e_3\right)-u_k(0)
\right|
}{
\left|\frac{\pi}{4k}e_3\right|^\alpha
}
=
\frac{\sec\theta}{2}
\left(\frac{4k}{\pi}\right)^\alpha
\longrightarrow\infty.
$$
Therefore, $\{u_k\}$ is not uniformly bounded in $C^\alpha(B_r)$ for any $0<\alpha<1$. 

It remains to verify the uniform oscillation bound. In fact, 
		$$
		\osc_{B_1}\left(\frac{\tan\theta}{2}
		\sum_{j=4}^{n}x_j^2\right)
		\le\frac{|\tan\theta|}{2}.
		$$
		Combining this with \eqref{eq:3dosc} gives
		$$
		\osc_{B_1}u_k
		\le4(\sec\theta+|\tan\theta|)
		+\frac{|\tan\theta|}{2},
		$$
		which is independent of $k$. This completes the proof.
	\end{proof}

\vspace{3em}
{\bf Disclosure on AI assistance.} The authors used AI-assisted tools, principally ChatGPT. The authors wrote and verified all theorem statements, proofs, and they take full responsibility for the contents of the paper.

\end{document}